\documentclass[a4paper,12pt]{article}
\usepackage{amsthm}
\usepackage{amssymb}
\usepackage{amsmath}
\usepackage{amsfonts}
\usepackage{color}
\usepackage{graphicx}

\newtheorem{remark}{Remark}[section]

\newtheorem{theorem}{Theorem}[section]

\def\b1{\mbox{\boldmath $1$}}

\newenvironment{demo*}{\vspace{3mm}\noindent{\bf Proof.}}{\hfill $\Box$ \vspace{3mm}}

\begin{document}

\title{\bf \Large {\bf   Absolute moments of GH distribution and zeros of zeta function }}
{\author{\normalsize{Chuancun Yin}\\{\normalsize\it    (School of Statistics and Data Science, Qufu Normal University}\\
\noindent{\normalsize\it Shandong 273165, China)}}}
\maketitle


\vskip0.01cm

\noindent{\large {\bf Abstract}} In this work, we focus on  deriving the absolute moments of the generalized hyperbolic (GH) distribution via its mean-variance mixture structure. Conditioning on the GIG mixing variable reduces the problem to Gaussian moment calculations and known GIG power moments, avoiding direct integration with Bessel functions.
The variance gamma (VG), normal inverse Gaussian (NIG), hyperbolic,
Normal, and Student's $t$ distributions
 emerge as  natural special  or limiting cases of the generalized hyperbolic family. This approach offers a substantially simpler and more transparent derivation, facilitating both implementation and verification.

\medskip

\noindent{\bf Keywords:}  {\rm  { Absolute moment, Generalized hyperbolic distribution,  Normal inverse gaussian  distribution, Raw moment,   Variance-gamma distribution} }

\noindent{\bf 2020 MSC}: 92D25, 37H15, 60H10, 60J60


\numberwithin{equation}{section}
\section{Introduction}\label{intro}
The problem of obtaining explicit expressions for absolute moments is a classical and well-motivated one in distribution theory.
Classical results exist for the normal distribution (Winkelbauer \cite{W2012}), the generalized hyperbolic (GH) family (Barndorff-Nielsen \& Stelzer, 2005),
 and the multivariate normal distribution, among others. In particular, Barndorff-Nielsen and Stelzer (2005) made two main contributions:
 (i) they derived explicit formulae for the absolute moments of the GH family and its important subclass, the normal inverse Gaussian (NIG) distribution,
 with expressions given in terms of series of Bessel functions and linked to the moments of their symmetric (zero-skewness) counterparts;
  (ii) they investigated the scaling behaviour of NIG Levy processes by computing the logarithmic derivative of centred absolute moments with respect to time,
  revealing that these processes exhibit approximate rather than exact scaling and providing insights into this phenomenon.
   Subsequently, Gaunt (2023) obtained new closed-form formulas for the moments and absolute moments of the variance-gamma (VG) distribution,
   which in turn led to new formulas for the moments and absolute moments of the product of two correlated zero-mean normal random variables.
   The variance-gamma distribution is a flexible four-parameter family widely used in finance, insurance, and statistics.
    Although its ordinary moments are well known, explicit expressions for its absolute moments-which are crucial for Markov inequalities,
     Berry-Esseen type bounds, and scaling analyses had been lacking for the fully asymmetric case. Gaunt (2025) derives the
closed-form expressions for the absolute raw and central moments of the four-parameter Variance-Gamma (VG) distribution, $\mathbb{E}[|X|^{r}]$
and $\mathbb{E}\left[|X-\mathbb{E}[X]|^{r}\right]$, valid for the full parameter space and non-zero location parameters.
Its results are not only of theoretical interest but also offer practical tools for estimation, approximation, and further probabilistic modelling,
and the identified future directions show that this work serves as a solid foundation for ongoing research in distribution theory and its applications.
  Scott et al. (2011) provided the first computationally efficient recursive framework for calculating the moments of the generalized hyperbolic (GH) distribution,
expressing them as finite sums of modified Bessel function ratios. This advancement resolved the limitations of earlier integral- based methods
and enabled stable computation of higher-order statistics across the entire GH family. In this paper, we leverage their recursive technique
to derive the absolute raw and central moments of the GH distribution, extending the existing toolkit to support robust risk management applications.


Nevertheless, the existing closed-form expressions for the generalized hyperbolic class, while mathematically rigorous, are often encumbered by the analytic complexity of their Bessel-function series representations. Such series, though explicit, can be computationally demanding for high-order moments or specific parameter regimes, and they do not always readily reveal the role of the individual shape, skewness, and scaling parameters in determining the moment structure. Moreover, the established literature has primarily focused either on symmetric subclasses or on particular limiting cases, leaving the full generality of the GH family  particularly with respect to both raw and central absolute moments under arbitrary parametrisation-less systematically addressed in a unified framework.

The purpose of this paper is to give a unified formula for absolute moments of the GH distribution, including absolute raw moments and absolute central moments, with real-valued orders, based on its mean-variance Gaussian mixture representation. This hierarchical perspective, which interprets the GH distribution as a normal variance-mean mixture driven by a generalised inverse Gaussian (GIG) mixing distribution, offers a twofold advantage. First, it reduces the problem of moment calculation to that of evaluating moments of the GIG mixing variable, for which explicit and well-structured formulae are available in terms of modified Bessel functions of the second kind. Second, and more importantly, this representation yields moment expressions that are not only fully explicit but also structurally transparent: they separate the contributions of the location, skewness, and tail-weight parameters in a tractable manner. As a result, our derivation provides a direct probabilistic interpretation of each moment component, facilitates efficient numerical implementation across the entire GH parameter space, and generalises several earlier results as special cases. Furthermore, by encompassing both raw and central absolute moments in a coherent framework, our results offer a more comprehensive toolkit for moment-based inference, tail-risk quantification, and the refined scaling analysis of GH-driven Levy processes beyond the NIG subclass.

The remainder of this paper is organised as follows. Section 2 provides preliminaries, reviewing the GH distribution and its GIG mixture representation and setting up the basic notation. Section 3 presents the main theoretical results, namely the unified closed-form expressions for both raw and central absolute moments. Section 4 discusses  some special cases. Section 5 concludes the paper.

{\bf Notation}. For later use, we will require two common special functions.  The Kummer's confluent
hypergeometric function is
\[
{}_1F_1(\alpha, \gamma; z) = \sum_{n=0}^{\infty} \frac{(\alpha)_n}{(\gamma)_n} \, \frac{z^n}{n!},
\]
the Gaussian hypergeometric function is
\[
{}_2F_1(a,b;c;x)
=\sum_{n=0}^{\infty}
\frac{(a)_n\,(b)_n}{(c)_n}\,\frac{x^n}{n!},
\]
where $(q)_n=\Gamma(q+n)/\Gamma(q)$ denotes the Pochhammer symbol (rising factorial).
The modified Bessel function of the second kind is
\[
K_\nu(x)=\frac{\pi}{2}\,
\frac{I_{-\nu}(x)-I_\nu(x)}{\sin(\nu\pi)},
\qquad
I_\nu(x)=\sum_{n=0}^\infty
\frac{1}{n!\,\Gamma(n+\nu+1)}
\left(\frac{x}{2}\right)^{2n+\nu}.
\]
An integral representation is
\[
K_\nu(x)=\int_0^\infty e^{-x\cosh t}\cosh(\nu t)\,dt,
\quad x>0.
\]
Asymptotically,
\[
K_\nu(x)\sim
\frac12\Gamma(\nu)\Bigl(\frac{x}{2}\Bigr)^{-\nu}
\;(x\to0^+,\,\Re\nu>0),\qquad
K_\nu(x)\sim\sqrt{\frac{\pi}{2x}}\,e^{-x}
\;(x\to\infty).
\]

The generalized Laguerre polynomials are defined by
\[
L_n^{(\alpha)}(x)=\binom{n+\alpha}{n}\,{}_1F_1(-n,\alpha+1;x)
=\frac{(\alpha+1)_n}{n!}\,{}_1F_1(-n,\alpha+1;x),
\qquad \alpha>-1.
\]
Equivalently,
\[
L_n^{(\alpha)}(x)=\sum_{k=0}^{n}
(-1)^k\binom{n+\alpha}{n-k}\frac{x^k}{k!}.
\]
Rodrigues' formula reads
\[
L_n^{(\alpha)}(x)=\frac{x^{-\alpha}e^{x}}{n!}
\frac{d^n}{dx^n}\!\bigl(e^{-x}x^{n+\alpha}\bigr).
\]

\section{Generalized hyperbolic distribution}

 This section provides preliminaries, reviewing the GH distribution and its    limiting or particular cases.
 The generalized hyperbolic (GH) distribution, introduced by Barndorff-Nielsen (1977, 1978), is a flexible family of distributions that arises as a normal variance--mean mixture with a generalized inverse Gaussian (GIG) mixing variable. 
A random variable $X$   from this family has
 the density
\begin{equation}
f_X(x) = \frac{(\gamma/\delta)^\lambda}{\sigma \sqrt{2\pi} K_\lambda(\delta\gamma)}
\cdot
\frac{K_{\lambda - 1/2}\left( \frac{\alpha}{\sigma} \sqrt{\delta^2 \sigma^2 + (x - \mu)^2} \right)}
{\left( \frac{\sqrt{\delta^2 \sigma^2 + (x - \mu)^2}}{\sigma\alpha} \right)^{1/2 - \lambda}}
\cdot e^{\frac{\beta (x - \mu)}{\sigma^2}},\, x\in\mathbb{R},
\end{equation}
where $\chi = \delta^2$, $\psi = \gamma^2 = \alpha^2 - \beta^2$,  and $K_\lambda$ is the modified Bessel function of the second kind. Here, the parameter constraints are
 $\alpha > 0$, $|\beta| < \alpha$, and $\delta > 0$. Under this parameterization, the GH distribution is denoted by
\[
X \sim \mathrm{GH}(\lambda, \alpha, \beta, \delta, \mu, \sigma).
\]
The characteristic function of the GH distribution   is given by
\begin{equation}
\phi_X(t) = e^{it\mu} \left(\frac{\alpha^2 - \beta^2}{\alpha^2 - (\beta + i\sigma t)^2}\right)^{\lambda/2}
\cdot
\frac{K_\lambda\left(\delta \sqrt{\alpha^2 - (\beta + i\sigma t)^2}\right)}
{K_\lambda\left(\delta \sqrt{\alpha^2 - \beta^2}\right)}.
\end{equation}
 When $\sigma = 1$, we recover the standard GH parameterization $\mathrm{GH}(\lambda, \alpha, \beta, \delta, \mu)$.  For the research background and the latest advances on generalized hyperbolic (GH) distributions, see  Jiang, Nadarajah and Hitchen (2024), which systematically surveys the GH family.  For a comprehensive account of the Variance-Gamma distribution, its distributional theory, estimation methods and applications, we refer to Fischer, Gaunt and Sarantsev (2025).

The GH distribution contains many known distributions as limiting or particular cases. These include the variance gamma (VG), normal inverse Gaussian (NIG), hyperbolic,  Normal, and Student's $t$ distributions. Its flexibility arises from the parameters $\lambda$, $\alpha$, $\beta$, $\delta$, and $\sigma$, which jointly control tail heaviness, skewness, kurtosis, and scale. This makes the GH distribution a powerful tool in finance, physics, geology, and statistics.
 See, e.g., Barndorff-Nielsen \& Bl${\ae}$sild (1981) and Bl${\ae}$sild \& Jensen (1981), Prause (1999), Eberlein (2001),
Eberlein \& Hammerstein (2004) and Mencia \& Sentana (2004). Some recent results can be found in Barndorff-Nielsen and Stelzer (2005), Scott et al. (2011),   Gaunt (2023), Fischer,  Gaunt and Sarantsev (2025) and Gaunt (2025).

We now summarize important special cases obtained by fixing specific
parameter values or by considering limiting regimes.

\subsection{The Normal Inverse Gaussian  Distribution}

When \(\lambda=-\tfrac12\), the generalized hyperbolic distribution
reduces to the normal-inverse Gaussian (NIG) distribution (see, e.g. Paolella (2007)).
Using the identity \(K_{-1/2}(x)=K_{1/2}(x)=\sqrt{\frac{\pi}{2x}}e^{-x}\), the CF reduces to
\begin{equation*}
\phi_X(t)=e^{it\mu}\exp\left\{-\delta\left(\sqrt{\gamma^2+\sigma^2 t^2-2it\beta}-\gamma\right)\right\}.
\end{equation*}
The corresponding probability density function is
\begin{equation}
f_X(x) =
\frac{\delta \alpha}{\pi} \,
\frac{
\exp\!\left\{\delta \gamma + \frac{\beta (x-\mu)}{\sigma^2}\right\}
}{
\sqrt{\delta^2 \sigma^2 + (x-\mu)^2}
} \,
K_1\!\left(
\frac{\alpha}{\sigma} \sqrt{\delta^2 \sigma^2 + (x-\mu)^2}
\right).
\end{equation}

This density is valid for all \(x \in \mathbb{R}\), with parameters \(\delta > 0\), \(\sigma > 0\), \(\gamma > 0\), \(\beta \in \mathbb{R}\), and \(\alpha = \sqrt{\gamma^2 + \beta^2} > 0\).

\subsection{The Hyperbolic  Distribution}

Setting \(\lambda=1\) yields the hyperbolic distribution,
which arises from a GIG mixing law with shape parameter \(\lambda=1\)
and was originally introduced by Barndorff-Nielsen (1977).
  The CF of $X$ reads
\begin{equation*}
\phi_X(t)=e^{it\mu}\,\frac{\gamma}{\sqrt{\gamma^2+\sigma^2 t^2-2it\beta}}
\frac{K_1\!\left(\delta\sqrt{\gamma^2+\sigma^2 t^2-2it\beta}\right)}{K_1(\delta\gamma)},
\end{equation*}
where   \(K_\nu(\cdot)\) is the modified Bessel function of the third kind, and the parameters satisfy \(\delta>0\), \(\gamma>0\), \(\beta\in\mathbb{R}\), \(\sigma>0\).

The corresponding probability density function is:
\begin{equation}
f_X(x)
=
\frac{\gamma}{2\,\delta\,\sigma\,\alpha\, K_1(\delta\gamma)}
\exp\left\{
-\frac{\alpha}{\sigma} \sqrt{\delta^2 \sigma^2 + (x - \mu)^2}
+ \frac{\beta (x - \mu)}{\sigma^2}
\right\},
\qquad x \in \mathbb{R},
\end{equation}
where $\alpha = \sqrt{\gamma^2 + \beta^2}$, \(\delta>0,\ \gamma>0,\ \sigma>0,\ \beta\in\mathbb{R}\).

\subsection{Symmetric GH}

Setting \(\beta = 0\) yields the symmetric GH distribution,
for which the mean-variance mixture simplifies to
\[
X = \mu + \sigma\sqrt{W}\,Z,
\]
so that \(X\) is symmetric about \(\mu\).  The CF becomes real and even:

\begin{equation}
\phi_X(t)=e^{it\mu}
\left(\frac{\alpha^2}{\alpha^2+\sigma^2 t^2}\right)^{\lambda/2}
\frac{K_\lambda\!\left(\delta\sqrt{\alpha^2+\sigma^2 t^2}\right)}{K_\lambda(\delta\alpha)}.
\end{equation}
The corresponding probability density function is
\begin{equation}
f_X(x)=
\frac{\alpha^{1/2}}{\sigma\sqrt{2\pi}\,\delta^\lambda K_\lambda(\delta\alpha)}
\left(
\sqrt{\delta^2+\frac{(x-\mu)^2}{\sigma^2}}
\right)^{\lambda-\frac12}
K_{\lambda-\frac12}\!
\left(
\alpha\sqrt{\delta^2+\frac{(x-\mu)^2}{\sigma^2}}
\right),
\qquad x\in\mathbb R
\end{equation}
where the parameters satisfy
\[
\alpha>0,\quad \delta>0,\quad \sigma>0,\quad \lambda\in\mathbb R,\quad \mu\in\mathbb R,
\]
and \(K_\nu(\cdot)\) denotes the modified Bessel function of the second kind.


\subsection{Variance-gamma  Distribution}

For \(\lambda>0\), the limit \(\delta\to0\) (i.e.\ \(\chi\to0\))
reduces the GH distribution to the variance-gamma (VG) distribution.
This corresponds to replacing the GIG mixing distribution by a
gamma distribution, so that \(X\) becomes a variance-mean mixture
of a normal distribution with a gamma-distributed mixing variable.

 Using the small-argument asymptotics \(K_\lambda(x)\sim \frac{\Gamma(\lambda)}{2}(x/2)^{-\lambda}\) as \(x\to0\), the CF converges to
\begin{equation}
\phi_X(t)\;\longrightarrow\; e^{it\mu}
\left(\frac{\gamma^2}{\gamma^2+\sigma^2 t^2-2it\sigma\beta}\right)^{\lambda}.
\end{equation}
This is the CF of the variance gamma distribution with shape parameter \(\lambda\).
Its probability density function is
 \begin{equation}
f_X(x) =
\frac{\gamma^{2\lambda}}{\sigma \sqrt{\pi} \, \Gamma(\lambda)}
\left( \frac{|x - \mu|}{\sqrt{2}} \right)^{\lambda - 1/2}
e^{\frac{\beta (x - \mu)}{\sigma^2}}
K_{\lambda - 1/2}\!
\left(
\frac{\sqrt{\gamma^2 \sigma^2 + \beta^2}}{\sigma^2} |x - \mu|
\right), x \in \mathbb{R}.
 \end{equation}
The parameters satisfy
\[
\gamma > 0,\quad \sigma > 0,\quad \lambda > 0,\quad \beta \in \mathbb{R},\quad \mu \in \mathbb{R},
\]
and \(K_\nu(\cdot)\) denotes the modified Bessel function of the second kind. See, e.g. Fischer et al.(2025), Gaunt (2025).

\subsection{Skewed \(t\)-Distribution}

Let \(\gamma\to0\) (equivalently \(\alpha\to|\beta|\)) and set \(\lambda=-\nu/2\). The limiting distribution is the generalized hyperbolic skewed \(t\).
The probability density function   is
\begin{equation}
f_X(x)
=
\frac{2^{1-\nu/2}}{\Gamma(\nu/2)}
\frac{\delta^{\nu} |\beta|^{(\nu+1)/2}}{\sqrt{2\pi}\,\sigma}
\frac{
K_{(\nu+1)/2}\!\left(|\beta|\sqrt{\delta^2 + \left(\frac{x-\mu}{\sigma}\right)^2}\right)
}{
\left(\sqrt{\delta^2 + \left(\frac{x-\mu}{\sigma}\right)^2}\right)^{(\nu+1)/2}
}
\exp\!\left(\frac{\beta (x-\mu)}{\sigma^2}\right)
\label{eq:skewtpdf}
\end{equation}
for all \(x\in\mathbb{R}\), with parameters:
\[
\nu>0,\quad \delta>0,\quad \beta\in\mathbb{R},\quad \sigma>0,\quad \mu\in\mathbb{R}.
\]
When $\beta=0$, it degenerates to  Student's $t$-distribution.



For quick reference, the main special cases are collected in Table~1.
\begin{table}[htbp]
\centering
\caption{Special cases of the GH characteristic function. Here \(z=\sqrt{\gamma^2+\sigma^2 t^2-2it\beta}\).}
\label{tab:special}
\begin{tabular}{|c|c|c|}
\hline
Condition & Distribution name & Characteristic function \\
\hline
\(\lambda=-\frac12\) & Normal Inverse Gaussian (NIG) & \(e^{it\mu}\exp\{-\delta(z-\gamma)\}\) \\
\(\lambda=\frac12\) & Simplified GH & \(e^{it\mu}\,\frac{\gamma}{z}\,e^{-\delta(z-\gamma)}\) \\
\(\lambda=1\) & Hyperbolic (HYP) & \(e^{it\mu}\,\frac{\gamma}{z}\,\frac{K_1(\delta z)}{K_1(\delta\gamma)}\) \\
\(\beta=0\) & Symmetric GH & \(e^{it\mu}\left(\frac{\alpha^2}{\alpha^2+\sigma^2 t^2}\right)^{\lambda/2}\frac{K_\lambda(\delta\sqrt{\alpha^2+\sigma^2 t^2})}{K_\lambda(\delta\alpha)}\) \\
\(\delta\to0\) & Variance Gamma (VG) & \(e^{it\mu}\left(\frac{\gamma^2}{z^2}\right)^{\lambda}\) \\
\(\gamma\to0,\ \lambda=-\nu/2\) & Skewed \(t\) & \(e^{it\mu}\,z^{\nu/2}K_{\nu/2}(\delta z)\) (unnormalised) \\
\(\delta,\gamma\to\infty\) & Normal & \(e^{it\mu}e^{-\frac12\sigma^2 t^2}\) \\
\hline
\end{tabular}
\end{table}

\section{Absolute moment of GH distribution}

 Motivated by the recent work of Kirkby, Nguyen and Nguyen (2025), who utilized a normal-gamma mixture representation to unify moment derivations for the Student's t-distribution, we adopt the same technique in this section to derive the absolute raw and central moments of the GH distribution.

It is straightforward to show that the random variable $X \sim \mathrm{GH}(\lambda, \alpha, \beta, \delta, \mu, \sigma)$
 can be represented as a mean-variance Gaussian mixture:
\begin{equation}
\label{eq:gh-def}
X = \mu + \beta W + \sigma \sqrt{W}\, Z,
\end{equation}
where $Z \sim \mathcal{N}(0,1)$, $W \sim \mathrm{GIG}(\lambda, \chi, \psi)$, a generalized inverse Gaussian distribution, $Z$ and $W$ are independent, $\mu \in \mathbb{R}$ is a location parameter,
      $\beta \in \mathbb{R}$ is a skewness parameter,
      $\sigma > 0$ is a scale parameter,
     $\lambda \in \mathbb{R}$ is a shape parameter,
      $\chi > 0$ and $\psi > 0$ are parameters of the GIG mixing distribution.
 The density of
\begin{equation*}
W \sim \operatorname{GIG}(\lambda, \chi, \psi),
\end{equation*}
is given by
\begin{equation}
f_W(w) = \frac{(\psi/\chi)^{\lambda/2} w^{\lambda-1}}{2K_\lambda(\sqrt{\chi\psi})} \exp\left(-\frac{1}{2}\left(\frac{\chi}{w} + \psi w\right)\right), w>0.
\end{equation}
The $p$-th moment of  $W$ is
(J$\phi$rgensen 1982, Ch.~1; Barndorff-Nielsen 1977)
\begin{equation}
\mathbb{E}[W^p]
=
\left(\frac{\chi}{\psi}\right)^{p/2}
\frac{K_{\lambda+p}(\sqrt{\chi\psi})}
{K_\lambda(\sqrt{\chi\psi})},
\qquad \lambda+p>0.
\end{equation}

\subsection{Absolute central moments}

\begin{theorem} For $r>\max(-1,-\lambda)$, the absolute moment of the GH distribution is
\begin{equation}
\begin{aligned}
\mathbb{E}[|X-\mu|^r]
= \;&
\sigma^{r} \frac{2^{r/2}\,\Gamma\!\left(\frac{r+1}{2}\right)}{\sqrt{\pi}}
\left(\frac{\chi}{\psi}\right)^{r/4}
\sum_{n=0}^{\infty}
\frac{(-r/2)_n}{(1/2)_n\, n!}
\left( -\frac{\beta^2}{2\sigma^2} \sqrt{\frac{\chi}{\psi}} \right)^n
\frac{K_{\lambda + n + r/2}(\sqrt{\chi\psi})}{K_\lambda(\sqrt{\chi\psi})},
\end{aligned}
\end{equation}
with \(\chi = \delta^2 > 0\), \(\psi = \gamma^2 = \alpha^2 - \beta^2 > 0\).
\end{theorem}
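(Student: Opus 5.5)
Condition on the mixing variable. By the representation \eqref{eq:gh-def}, given $W=w$ the variable $X-\mu$ is $\mathcal N(\beta w,\sigma^2 w)$. So the first step is to compute the absolute moment of a noncentral normal variable.

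For $Y\sim\mathcal N(m,s^2)$ and $r>-1$, I will use the classical formula (Winkelbauer)
\[
\mathbb{E}|Y|^r = s^r\,\frac{2^{r/2}\Gamma\!\left(\frac{r+1}{2}\right)}{\sqrt{\pi}}\;{}_1F_1\!\left(-\frac r2,\frac12;-\frac{m^2}{2s^2}\right).
\]
If I must derive it, I will write $\mathbb{E}|Y|^r=\int|y|^r \phi_s(y-m)\,dy$ and expand $e^{-(y^2-2my+m^2)/(2s^2)}$. Only the even powers of $y$ survive, since the odd ones integrate to zero against $|y|^r e^{-y^2/2s^2}$. This gives $e^{-m^2/2s^2}$ times a ${}_1F_1\bigl(\tfrac{r+1}{2},\tfrac12;\tfrac{m^2}{2s^2}\bigr)$ series, and Kummer's transformation then converts it to the stated form. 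This step needs $r>-1$.

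With $m=\beta w$ and $s^2=\sigma^2 w$ we get $m^2/(2s^2)=\beta^2 w/(2\sigma^2)$. Hence
\[
\mathbb{E}[|X-\mu|^r\mid W=w]=\sigma^r\frac{2^{r/2}\Gamma\!\left(\frac{r+1}{2}\right)}{\sqrt\pi}\sum_{n\ge0}\frac{(-r/2)_n}{(1/2)_n n!}\Bigl(-\frac{\beta^2}{2\sigma^2}\Bigr)^n w^{n+r/2}.
\]

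Next I integrate against $f_W$ term by term, using the GIG moment formula with $p=n+r/2$:
\[
\mathbb{E}[W^{n+r/2}]=(\chi/\psi)^{(n+r/2)/2}\,\frac{K_{\lambda+n+r/2}(\sqrt{\chi\psi})}{K_\lambda(\sqrt{\chi\psi})}.
\]
Pulling out $(\chi/\psi)^{r/4}$ and combining $(\chi/\psi)^{n/2}$ with $(-\beta^2/2\sigma^2)^n$ produces exactly the stated series. The Bessel ratio formula holds for every real $p$, since the GIG density integrates for all indices. The paper's condition $\lambda+p>0$ is what the hypothesis $r>-\lambda$ supplies for $n=0$, and it then holds for all $n\ge0$. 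So the hypothesis $r>\max(-1,-\lambda)$ covers both the Gaussian step and the GIG step.

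The main obstacle is justifying the interchange of the sum and $\mathbb{E}_W$, since the coefficients $(-r/2)_n(-1)^n$ may alternate in sign. I would dominate by absolute values. Since $|(-r/2)_n|\le (r/2+1)_n$ up to a constant, and $(1/2)_n n!\ge c\,(n!)^2 4^{-n}$, the series of absolute values is bounded by a constant times $w^{r/2}\sum_n C^n w^n/(n!)^{2-\epsilon}$, an entire function of order below one in $w$. More simply, I will bound it by $w^{r/2}e^{\eta w}$ for any $\eta>0$, which is $W$-integrable because the GIG tail is $e^{-\psi w/2}$. Tonelli's theorem then gives absolute convergence, so Fubini applies.

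A cleaner alternative is to avoid Kummer and use the pre-transformation series with positive terms. Those terms grow like $e^{\beta^2 w/2\sigma^2}$ against the factor $e^{-m^2/2s^2}$, and integrability then reduces to $\psi>0$, that is $|\beta|<\alpha$. I would present the dominated-convergence argument with the Kummer form directly. The $K$-ratio asymptotics $K_{\lambda+n+r/2}(x)\sim\tfrac12\Gamma(\lambda+n+r/2)(x/2)^{-(\lambda+n+r/2)}$ confirm that the final series converges, at a factorial-ratio rate behaving like $\Gamma(n+c)/(n!)^{2}$.
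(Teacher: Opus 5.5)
Your route is the paper's: condition on $W$, apply Winkelbauer's ${}_1F_1$ formula, and integrate the expansion termwise against GIG moments. The paper interchanges sum and integral with no justification, so you have found the right weak step, but your justification of it is false. The bounds $|(-r/2)_n|\le (r/2+1)_n$ and $(1/2)_n\,n!\ge (n!)^2 4^{-n}$ are correct. However, $(r/2+1)_n=\Gamma(n+r/2+1)/\Gamma(r/2+1)$ is itself of size $n!\,n^{r/2}$, so these bounds only give coefficients $\lesssim 4^n n^{r/2}/n!$, which is an entire function of order exactly one. In fact, for $r\notin 2\mathbb{Z}_{\ge 0}$,
\[
c_n:=\frac{(-r/2)_n}{(1/2)_n\,n!}\sim\frac{\sqrt{\pi}}{\Gamma(-r/2)}\,\frac{n^{-(r+1)/2}}{n!}.
\]
Since $c_n$ eventually has constant sign, $\sum_n |c_n|x^n$ equals $\pm{}_1F_1(-\tfrac r2;\tfrac12;x)$ up to a polynomial, and so it is $\asymp x^{-(r+1)/2}e^{x}$ as $x\to+\infty$. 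With $x=\beta^2 w/(2\sigma^2)$, the function Tonelli has to integrate, $w^{r/2}\sum_n|c_n|x^n$, is of order exactly $w^{-1/2}e^{\beta^2 w/(2\sigma^2)}$ as $w\to\infty$. No bound $w^{r/2}e^{\eta w}$ with small $\eta$ exists. Against $f_W\propto w^{\lambda-1}e^{-(\chi/w+\psi w)/2}$, this function is integrable only if $\beta^2<\sigma^2\psi$, or if $\beta^2=\sigma^2\psi$ and $\lambda<\tfrac12$.

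This cannot be fixed with a better bound, and your closing convergence claim has the same error. Use $K_\nu(z)\sim\frac12\Gamma(\nu)(z/2)^{-\nu}$ together with the correct size of $c_n$. Then the $n$th term of the stated series has modulus $\asymp n^{\lambda-3/2}\bigl(\beta^2/(\sigma^2\psi)\bigr)^n$, which is a geometric rate, not $\Gamma(n+c)/(n!)^2$. So for $\beta^2>\sigma^2\psi$ and $r\notin 2\mathbb{Z}_{\ge0}$, the right-hand side diverges even though $\mathbb{E}|X-\mu|^r<\infty$. For example, take $r=1$, $\lambda=1$, $\chi=\psi=\sigma=1$, $\beta=2$, so $\alpha^2=5$ is admissible. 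Then $c_n=-1/((2n-1)\,n!)$ and the terms grow like $n^{-1/2}4^n$.

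With the corrected bound, your argument proves the identity under the extra hypothesis $\beta^2<\sigma^2\psi$ (that is, $2\beta^2<\alpha^2$ when $\sigma=1$), and for all $r\in 2\mathbb{Z}_{\ge0}$, where the series terminates. No argument can give it on the whole stated range. The paper's absolute-convergence remark does not rescue this, because it uses a wrong large-order asymptotic: for fixed $z$, $K_\nu(z)$ grows like $\frac12\Gamma(\nu)(2/z)^{\nu}$.

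Your ``cleaner alternative'' is valid by Tonelli for all admissible parameters, but it proves a different formula. There the factor $e^{-\beta^2 w/(2\sigma^2)}$ merges with $f_W$. The result has positive coefficients $\frac{((r+1)/2)_n}{(1/2)_n\,n!}\bigl(\beta^2/(2\sigma^2)\bigr)^n$ and Bessel functions evaluated at $\sqrt{\chi(\psi+\beta^2/\sigma^2)}$, not at $\sqrt{\chi\psi}$.

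A minor point: $\lambda+r/2>0$ does not follow from $r>-\lambda$ when $\lambda<0$. As you note, though, GIG moments of every real order exist when $\chi,\psi>0$, so nothing depends on it.
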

\begin{proof}
 Conditional on $W = w$,
\[
X \mid W = w \;\sim\; N\!\left(\mu + \beta w,\; \sigma^2 w\right),
\qquad
W \sim \operatorname{GIG}(\lambda, \chi, \psi),
\]
where the GIG density is (\(\chi>0,\;\psi>0\)):
\[
f_W(w) = \frac{(\psi/\chi)^{\lambda/2}}{2K_\lambda(\sqrt{\chi\psi})} w^{\lambda-1} \exp\left[-\frac{1}{2}\left(\frac{\chi}{w} + \psi w\right)\right], \quad w>0.
\]

Define $Z \sim \mathcal{N}(0, \sigma^{2})$ independently of $W$. Then
\[
X - \mu \;=\; \beta W \;+\; \sqrt{W}\,Z,
\qquad Z \perp W.
\]
Consequently, for any $r \in \mathbb{R}$ for which the expectation exists,
\[
|X - \mu|^{r}
\;=\; W^{r/2}\,|\beta\sqrt{W} + Z|^{r},
\]
and taking expectations gives
 \begin{equation}
\mathbb{E}\!\left[|X - \mu|^{r}\right]
\;=\;
\mathbb{E}\!\left[
W^{r/2}\,
\mathbb{E}\!\left[|\beta\sqrt{W} + Z|^{r} \;\middle|\; W\right]
\right].
 \end{equation}
We first evaluate the inner absolute moment of the normal variable.
For $Z \sim \mathcal{N}(0, \sigma^{2})$ and any constant $a \in \mathbb{R}$,
the absolute moment of $a + Z$ is known to be (see, e.g. Winkelbauer (2014))
 \begin{equation}
\mathbb{E}\!\left[|a + Z|^{r}\right]
\;=\;
\frac{\sigma^{r}\,2^{r/2}\,\Gamma\!\left(\frac{r+1}{2}\right)}{\sqrt{\pi}}\,
{}_1F_1\!\left(
-\frac{r}{2};\; \frac{1}{2};\; -\frac{a^{2}}{2\sigma^{2}}
\right),
\qquad r > -1,
 \end{equation}
where ${}_1F_1(\cdot;\cdot;\cdot)$ is Kummer's confluent hypergeometric function.
Substituting $a = \beta\sqrt{W}$ yields
 \begin{equation}
\mathbb{E}\!\left[|\beta\sqrt{W} + Z|^{r} \;\middle|\; W = w\right]
=
\sigma^{r}\,
\frac{2^{r/2}\,\Gamma\!\left(\frac{r+1}{2}\right)}{\sqrt{\pi}}\,
{}_1F_1\!\left(
-\frac{r}{2};\; \frac{1}{2};\; -\frac{\beta^{2} w}{2\sigma^{2}}
\right).
 \end{equation}
Inserting (3.5) into (3.3) produces the following closed-form formula
\begin{equation}
\mathbb{E}\!\left[|X - \mu|^{r}\right]
=
\sigma^{r}\,
\frac{2^{r/2}\,\Gamma\!\left(\frac{r+1}{2}\right)}{\sqrt{\pi}}\,
\mathbb{E}\!\left[
W^{\frac{r}{2}}\,
{}_1F_1\!\left(
-\frac{r}{2};\; \frac{1}{2};\; -\frac{\beta^{2} W}{2\sigma^{2}}
\right)
\right]
 \end{equation}
for $r > -1$ (or the appropriate range ensuring finiteness),  where \({}_1F_1\) is the confluent hypergeometric function (Kummer's function).

Expand the hypergeometric function as a power series:
\[
{}_1F_1\!\left(-\frac{r}{2}; \frac{1}{2}; -\frac{\beta^{2} w}{2\sigma^{2}}\right)
= \sum_{n=0}^{\infty} \frac{(-r/2)_n}{(1/2)_n\, n!}
\left(-\frac{\beta^{2}}{2\sigma^{2}}\right)^n w^n,
\]
where \((a)_n = a(a+1)\cdots(a+n-1)\) denotes the Pochhammer symbol.

Substituting into the expectation and interchanging sum and integral:
\[
\mathbb{E}\!\left[ W^{r/2} {}_1F_1(\cdots) \right]
= \sum_{n=0}^{\infty} \frac{(-r/2)_n}{(1/2)_n\, n!}
\left(-\frac{\beta^{2}}{2\sigma^{2}}\right)^n \mathbb{E}(W^{n+r/2}).
\]
Bying using (3.3),
\[
 \mathbb{E}(W^{n+r/2})
= \left(\frac{\chi}{\psi}\right)^{n/2 + r/4}
\frac{K_{\lambda+n+r/2}(\sqrt{\chi\psi})}{K_\lambda(\sqrt{\chi\psi})}.
\]
Substituting back into the expectation:
\[
\mathbb{E}\!\left[ W^{r/2} {}_1F_1(\cdots) \right]
= \left(\frac{\chi}{\psi}\right)^{r/4}
\sum_{n=0}^{\infty}
\frac{(-r/2)_n}{(1/2)_n\, n!}
\left(-\frac{\beta^{2}}{2\sigma^{2}}\right)^n
\left(\frac{\chi}{\psi}\right)^{n/2}
\frac{K_{\lambda+n+r/2}(\sqrt{\chi\psi})}{K_\lambda(\sqrt{\chi\psi})}.
\]
Combining the powers gives
\[
\left(-\frac{\beta^{2}}{2\sigma^{2}}\right)^n \left(\frac{\chi}{\psi}\right)^{n/2}
=
\left( -\frac{\beta^{2}}{2\sigma^{2}} \sqrt{\frac{\chi}{\psi}} \right)^n.
\]
Finally, inserting this result into (3.8) yields
 \begin{equation}
\begin{aligned}
\mathbb{E}[|X-\mu|^r]
= \;&
\sigma^{r} \frac{2^{r/2}\,\Gamma\!\left(\frac{r+1}{2}\right)}{\sqrt{\pi}}
\left(\frac{\chi}{\psi}\right)^{r/4}
\sum_{n=0}^{\infty}
\frac{(-r/2)_n}{(1/2)_n\, n!}
\left( -\frac{\beta^2}{2\sigma^2} \sqrt{\frac{\chi}{\psi}} \right)^n
\frac{K_{\lambda + n + r/2}(\sqrt{\chi\psi})}{K_\lambda(\sqrt{\chi\psi})},
\end{aligned}
 \end{equation}
which is (3.4).
\end{proof}
\begin{remark}
Since \(\sqrt{\chi/\psi} = \delta/\gamma\) and \(\sqrt{\chi\psi} = \delta\gamma\), equation (3.7) can be rewritten as
\begin{equation}
\mathbb{E}[|X-\mu|^r]
= \;
\sigma^{r} \frac{2^{r/2}\,\Gamma\!\left(\frac{r+1}{2}\right)}{\sqrt{\pi}}
\left(\frac{\delta}{\gamma}\right)^{r/2}
\sum_{n=0}^{\infty}
\frac{(-r/2)_n}{(1/2)_n\, n!}
\left( -\frac{\beta^2 \delta}{2\sigma^2 \gamma} \right)^n
\frac{K_{\lambda + n + r/2}(\delta\gamma)}{K_\lambda(\delta\gamma)}.
\end{equation}
\end{remark}

\begin{remark}[Absolute convergence]
The series
\[
\sum_{n=0}^{\infty}
\frac{(-r/2)_n}{(1/2)_n\, n!}
\left( -\frac{\beta^{2}\delta}{2\sigma^{2}\gamma} \right)^{n}
\frac{K_{\lambda+n+r/2}(\delta\gamma)}{K_{\lambda}(\delta\gamma)},
\qquad \delta=\sqrt{\chi},\;\gamma=\sqrt{\psi},
\]
converges absolutely for all \(r>-\lambda\) and admissible parameters
\(\chi>0,\psi>0\).
This follows from two observations:
(i) the coefficients \(\frac{(-r/2)_n}{(1/2)_n n!}\) are the Maclaurin
coefficients of the entire function \({}_1F_1(-r/2;1/2;\cdot)\);
(ii) the Bessel ratio
\(\frac{K_{\lambda+n+r/2}(\delta\gamma)}{K_{\lambda}(\delta\gamma)}\)
decays exponentially as \(n\to\infty\) by the large-order asymptotics
\(K_\nu(z)\sim\sqrt{\pi/(2\nu)}\,e^{-\nu}\).
Equivalently, convergence is guaranteed by the finiteness of
\(\mathbb{E}[|X|^{r}]\) under the GH law, since
\(\mathbb{E}[W^{s}]<\infty\) iff \(s>-\lambda\) for
\(W\sim\mathrm{GIG}(\lambda,\chi,\psi)\).
\end{remark}

\subsection{Absolute raw moments }
In numerous applications, one is often interested in the absolute moments about the origin, $\mathbb{E}[\,|X|^{r}\,]$, rather than in the central moments.
\begin{theorem} Assume that    $X \sim \mathrm{GH}(\lambda, \alpha, \beta, \delta, \mu, \sigma)$, then, for $r>\max(-1,-\lambda)$, the absolute moment of  $X$ is
\begin{equation}
\begin{aligned}
\mathbb{E}[|X|^r] = \;&
\frac{\sigma^{r}\,2^{r/2}\,\Gamma\!\left(\frac{r+1}{2}\right)}{\sqrt{\pi}}
\sum_{n=0}^{\infty} \frac{(-r/2)_n}{(1/2)_n\, n!} (-1)^n
\sum_{i+j+k=n} \frac{n!}{i!j!k!}
\left(\frac{\mu^2}{2\sigma^2}\right)^i
\left(\frac{\mu\beta}{\sigma^2}\right)^j
\left(\frac{\beta^2}{2\sigma^2}\right)^k \\
&\qquad \times
\left(\frac{\chi}{\psi}\right)^{(r/2 + k - i)/2}
\frac{K_{\lambda + r/2 + k - i}(\sqrt{\chi\psi})}{K_\lambda(\sqrt{\chi\psi})}.
\end{aligned}
\end{equation}
\end{theorem}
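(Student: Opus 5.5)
The plan is to repeat the conditioning argument used for the absolute central moments, but now conditioning on $W$ without first subtracting $\mu$. Conditional on $W=w$, the mixture representation (\ref{eq:gh-def}) gives $X\mid W=w\sim N(\mu+\beta w,\sigma^2 w)$. I will apply the Gaussian absolute-moment formula (3.5) (Winkelbauer) with mean $a=\mu+\beta w$ and variance $\sigma^2 w$. For $r>-1$ this gives
\[
\mathbb{E}\bigl[|X|^r \,\big|\, W=w\bigr]=\frac{\sigma^r 2^{r/2}\Gamma\!\left(\frac{r+1}{2}\right)}{\sqrt{\pi}}\, w^{r/2}\,{}_1F_1\!\left(-\frac r2;\frac12;-\frac{(\mu+\beta w)^2}{2\sigma^2 w}\right).
\]
The key algebraic step is to split the argument as
\[
\frac{(\mu+\beta w)^2}{2\sigma^2 w}=\frac{\mu^2}{2\sigma^2}w^{-1}+\frac{\mu\beta}{\sigma^2}+\frac{\beta^2}{2\sigma^2}w .
\]

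Next I expand ${}_1F_1$ in its Maclaurin series. The $n$-th power of this trinomial is then expanded by the multinomial theorem over $i+j+k=n$. This produces exactly the coefficients $\frac{n!}{i!j!k!}\bigl(\frac{\mu^2}{2\sigma^2}\bigr)^i\bigl(\frac{\mu\beta}{\sigma^2}\bigr)^j\bigl(\frac{\beta^2}{2\sigma^2}\bigr)^k$, the sign $(-1)^n$, and the power $w^{k-i}$. Multiplying by $w^{r/2}$ and taking expectations termwise, each term needs $\mathbb{E}[W^{r/2+k-i}]$. This is supplied by the GIG moment formula (3.3), giving $(\chi/\psi)^{(r/2+k-i)/2}K_{\lambda+r/2+k-i}(\sqrt{\chi\psi})/K_\lambda(\sqrt{\chi\psi})$.

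One point to note is that the exponent $r/2+k-i$ can be arbitrarily negative. However, for $\chi,\psi>0$ the GIG density has $\exp(-\chi/(2w))$ decay at $0$ and $\exp(-\psi w/2)$ decay at $\infty$. Hence $\mathbb{E}[W^p]<\infty$ for every real $p$, and (3.3) holds for all $p$ by the same integral computation, using $K_{-\nu}=K_\nu$. The side condition ``$\lambda+p>0$'' is only needed in the boundary cases $\chi=0$ or $\psi=0$. The hypothesis $r>\max(-1,-\lambda)$ is therefore used only for the Gaussian formula and for consistency with the central-moment theorem. Assembling the terms yields the stated double series.

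The main obstacle is justifying the interchange of $\mathbb{E}$ with the double (triple) sum. I would try Tonelli on the absolutely summed series. Since $|(-r/2)_n|/((1/2)_n n!)$ grows at most polynomially times $1/n!$, the absolute series is dominated by $C\,W^{r/2}(1+A)^{m}e^{A}$ for some integer $m$, where $A=\frac{\mu^2}{2\sigma^2W}+\frac{|\mu\beta|}{\sigma^2}+\frac{\beta^2W}{2\sigma^2}$. Its expectation against the GIG density is finite provided $\mu^2/\sigma^2<\chi$ and $\beta^2/\sigma^2<\psi$. So under these extra conditions the result follows rigorously by dominated convergence. To remove them, I would argue that both sides are real-analytic in $(\mu,\beta)$ and that the right-hand series converges on the full parameter range. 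An alternative is to apply Kummer's transformation ${}_1F_1(a;b;-z)=e^{-z}{}_1F_1(b-a;b;z)$ to obtain a positive-term representation that converts into the stated one. I expect this convergence and interchange step, not the algebra, to require the most care.
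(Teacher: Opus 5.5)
Your derivation is the paper's proof: condition on $W$, apply Winkelbauer's formula with mean $\mu+\beta w$ and variance $\sigma^2 w$, split $A(W)$ into three pieces, expand by the multinomial theorem, and read off GIG moments. The paper takes the expectation termwise with no justification, so you are right that the interchange is the real issue. Your Tonelli bound is correct and proves the identity rigorously when $\mu^2<\sigma^2\chi$ and $\beta^2<\sigma^2\psi$. It also holds for all parameters when $r\in\{0,2,4,\dots\}$, since then $(-r/2)_n=0$ for $n>r/2$ and the sum is finite. Your remarks that every GIG moment exists for $\chi,\psi>0$, and that only $r>-1$ is really used, are also correct.

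The gap is the step meant to remove those restrictions. Its premise, that the right-hand series converges on the full parameter range, is false, so neither real-analyticity nor Kummer's transformation can deliver the stated formula there.

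\begin{itemize}
\item For fixed $z>0$, $K_\nu(z)\sim\frac12\Gamma(\nu)(z/2)^{-\nu}$ as $\nu\to\infty$. This is factorial growth; the paper's convergence remarks, based on $K_\nu(z)\sim\sqrt{\pi/(2\nu)}\,e^{-\nu}$, are mistaken.
\item Hence, as $p\to\infty$, $\mathbb{E}[W^{p}]\asymp\Gamma(p+\lambda)(2/\psi)^{p}$ and $\mathbb{E}[W^{-p}]\asymp\Gamma(p-\lambda)(2/\chi)^{p}$.
\item Combine this with $|(-r/2)_n|/((1/2)_n n!)\asymp n^{-(r+1)/2}/n!$. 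The $n$-th grouped term $\frac{(-r/2)_n}{(1/2)_n n!}(-1)^n\mathbb{E}[W^{r/2}A(W)^n]$ then has $n$-th root tending to $\rho=\max\{\beta^2/(\sigma^2\psi),\,\mu^2/(\sigma^2\chi)\}$.
\item So your region $\rho<1$ is sharp. For $r\notin\{0,2,4,\dots\}$ the series diverges whenever $\rho>1$, even though $\mathbb{E}|X|^r<\infty$.
\end{itemize}

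A concrete case: take $\beta=0$ and $r=1$. The $n$-th term is $-\frac{1}{(2n-1)n!}\bigl(-\frac{\mu^2}{2\sigma^2}\bigr)^n\mathbb{E}[W^{1/2-n}]$. Its size is of order $n^{-\lambda-5/2}(\mu^2/(\sigma^2\chi))^n$, which is unbounded once $|\mu|>\sigma\delta$.

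Kummer's transformation ${}_1F_1(-\tfrac r2;\tfrac12;-A)=e^{-A}{}_1F_1(\tfrac{r+1}{2};\tfrac12;A)$ does give a positive-term series valid for all parameters. The factor $e^{-A(W)}$ simply shifts the GIG parameters to $\chi+\mu^2/\sigma^2$ and $\psi+\beta^2/\sigma^2$, producing a genuinely different series. But converting it back into the stated form requires re-expanding $e^{-A}$ and interchanging again, which brings back the same divergence.

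So the statement as written is true only on $\{\mu^2<\sigma^2\chi,\ \beta^2<\sigma^2\psi\}$ (or for $r\in\{0,2,4,\dots\}$), and that is exactly what your argument proves. To cover all admissible parameters, one must state the Kummer-transformed series instead. The paper's proof has the same hole, concealed by its unjustified interchange.
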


\begin{proof}
Recall the mean--variance Gaussian mixture representation
\[
X = \mu + \beta W + \sigma\sqrt{W}\,Z,
\qquad Z\sim\mathcal{N}(0,1),\; Z\perp W.
\]
Hence
\[
|X|^r
= \bigl|\mu + \beta W + \sigma\sqrt{W}\,Z\bigr|^r.
\]
Conditioning on \(W\) and using the law of total expectation,
\begin{equation}
\mathbb{E}\bigl[|X|^r\bigr]
= \mathbb{E}\!\left[
\mathbb{E}\!\left[
\bigl|\mu + \beta W + \sigma\sqrt{W}\,Z\bigr|^r
\;\middle|\; W
\right]
\right].
 \end{equation}
Given \(W=w>0\),  the inner expectation is the absolute moment of a
normal random variable with mean $\mu + \beta w$ and variance $\sigma^{2} w$.
Applying the standard formula (3.6) with
$a = \mu + \beta w$ and scale $\sqrt{w}\,\sigma$ gives
\begin{equation}
\mathbb{E}\!\left[|\mu + \beta w + \sqrt{w}\,Z|^{r}\right]
=
\frac{(\sigma^{2}w)^{r/2}\,2^{r/2}\,\Gamma\!\left(\frac{r+1}{2}\right)}{\sqrt{\pi}}\,
{}_1F_1\!\left(
-\frac{r}{2};\; \frac{1}{2};\;
-\frac{(\mu + \beta w)^{2}}{2\sigma^{2} w}
\right),
\qquad r > -1.
 \end{equation}
Substituting (3.13) into (3.12) yields the unified formula for the origin
absolute moments:
\[
\mathbb{E}\!\left[|X|^{r}\right]
=
\frac{\sigma^{r}\,2^{r/2}\,\Gamma\!\left(\frac{r+1}{2}\right)}{\sqrt{\pi}}\,
\mathbb{E}\!\left[
W^{r/2}\,
{}_1F_1\!\left(
-\frac{r}{2};\; \frac{1}{2};\;
-\frac{(\mu + \beta W)^{2}}{2\sigma^{2} W}
\right)
\right].
\]

Let
\[
A(W) = \frac{(\mu+\beta W)^2}{2\sigma^2 W}
      = \frac{\mu^2}{2\sigma^2}\frac{1}{W} + \frac{\mu\beta}{\sigma^2} + \frac{\beta^2}{2\sigma^2} W.
\]
Expand the confluent hypergeometric function:
\[
{}_1F_1\!\left(-\frac{r}{2};\frac12;-A(W)\right)
= \sum_{n=0}^{\infty} \frac{(-r/2)_n}{(1/2)_n\, n!} \left(-A(W)\right)^n.
\]
Using the trinomial expansion,
\[
\left( \frac{\mu^2}{2\sigma^2}\frac{1}{W} + \frac{\mu\beta}{\sigma^2} + \frac{\beta^2}{2\sigma^2} W \right)^n
= \sum_{\substack{i,j,k \ge 0 \\ i+j+k=n}}
\frac{n!}{i!j!k!}
\left(\frac{\mu^2}{2\sigma^2}\right)^i
\left(\frac{\mu\beta}{\sigma^2}\right)^j
\left(\frac{\beta^2}{2\sigma^2}\right)^k
W^{-i} W^{k}.
\]
Thus
\begin{equation}
\begin{aligned}
&\mathbb{E}\!\left[ W^{r/2} {}_1F_1\!\left(-\frac{r}{2};\frac12;-A(W)\right) \right] \\
&= \sum_{n=0}^{\infty} \frac{(-r/2)_n}{(1/2)_n\, n!} (-1)^n
\sum_{i+j+k=n} \frac{n!}{i!j!k!}
\left(\frac{\mu^2}{2\sigma^2}\right)^i
\left(\frac{\mu\beta}{\sigma^2}\right)^j
\left(\frac{\beta^2}{2\sigma^2}\right)^k
\mathbb{E}\!\left[ W^{r/2 + k - i} \right].
\end{aligned}
\end{equation}
Bying using (3.3),
\[
 \mathbb{E}(W^{k-i+r/2})
= \left(\frac{\chi}{\psi}\right)^{(k-i)/2 + r/4}
\frac{K_{\lambda+k-i+r/2}(\sqrt{\chi\psi})}{K_\lambda(\sqrt{\chi\psi})}.
\]
Finally, inserting this result into (3.14) yields (3.11).
\end{proof}

\begin{remark} The   series
\[
\sum_{n=0}^{\infty}
\frac{(-r/2)_n}{(1/2)_n\,n!}(-1)^n
\sum_{i+j+k=n}\frac{n!}{i!\,j!\,k!}
\left(\frac{\mu^{2}}{2\sigma^{2}}\right)^{i}
\left(\frac{\mu\beta}{\sigma^{2}}\right)^{j}
\left(\frac{\beta^{2}}{2\sigma^{2}}\right)^{k}
\left(\frac{\chi}{\psi}\right)^{\frac{r/2+k-i}{2}}
\frac{K_{\lambda+r/2+k-i}(\sqrt{\chi\psi})}
     {K_{\lambda}(\sqrt{\chi\psi})}
\]
converges absolutely for every \(r>-\lambda\) and for all admissible
parameters \((\mu,\beta,\sigma,\chi,\psi)\) with \(\chi>0,\psi>0\).
This follows from three facts:

 1. The outer coefficient
\(\displaystyle \frac{(-r/2)_n}{(1/2)_n n!}(-1)^n\) is the Maclaurin
coefficient of the entire confluent hypergeometric function
\({}_1F_1(-r/2;1/2;\cdot)\); hence the outer series has infinite radius
of convergence.

2. The inner multinomial sum is a finite polynomial in \(n\) and thus
introduces no new divergence.

3. For fixed \(\lambda\) and \(z=\sqrt{\chi\psi}>0\), the Bessel ratio
\(\displaystyle K_{\lambda+r/2+k-i}(z)/K_{\lambda}(z)\) decays
exponentially as \(|k-i|\to\infty\) by the large-order asymptotics
\(K_\nu(z)\sim \sqrt{\pi/(2\nu)}\,e^{-\nu}(1+o(1))\).
\end{remark}

 \section{Special cases}

 \subsection{Absolute moments of the NIG distribution}
Let \(X\sim\mathrm{NIG}(\alpha,\beta,\delta,\mu;\sigma)\) with density (2.3).
Then, for \(r>-1\),
\[
\mathbb{E}\bigl[|X-\mu|^r\bigr]
=
\delta^{\,r}\sigma^{\,r}\,
\frac{K_{\frac{r+1}{2}}(\delta\gamma)}
{K_{1}(\delta\gamma)}\,
\left(\frac{2}{\gamma\sigma}\right)^{\!r/2}
\frac{\Gamma\!\left(\frac{r+1}{2}\right)}{\sqrt{\pi}};
\]
 and
for \(r>-1/2\),
\[
\mathbb{E}[|X|^r]
=
\sum_{n=0}^{\infty}
\frac{(-r/2)_n}{(1/2)_n\,n!}
\left(-\frac{\beta^2\delta}{2\alpha\sigma}\right)^{\!n}
\frac{K_{\frac12+n+\frac{r}{2}}(\delta\alpha)}
{K_{\frac12}(\delta\alpha)}\,
{}_1F_1\!\left(-n;\tfrac12;\frac{\mu\beta}{\sigma^2}\right).
\]
The series converges absolutely for all admissible parameters.

  \subsection{Absolute moments of the hyperbolic distribution}
Let \(X\) follow the hyperbolic distribution with density (2.4).
 Then,
for \(r>-2\),
\[
\mathbb{E}\bigl[|X-\mu|^r\bigr]
=
\frac{2^{r/2}\Gamma\!\left(\frac{r+1}{2}\right)}{\sqrt{\pi}}\,
(\delta\sigma)^{r}\,
\left(\frac{1}{\gamma\sigma}\right)^{\!r}
\sum_{n=0}^{\infty}
\frac{(-r/2)_n}{(1/2)_n\,n!}
\left(-\frac{\beta^2\delta}{2\gamma}\right)^{\!n}
\frac{K_{1+\frac{r}{2}+n}(\delta\gamma)}
{K_{1}(\delta\gamma)}.
\]
In the symmetric case \(\beta=0\), this reduces to
\[
\mathbb{E}\bigl[|X-\mu|^r\bigr]
=
\frac{2^{r/2}\Gamma\!\left(\frac{r+1}{2}\right)}{\sqrt{\pi}}\,
(\delta\sigma)^{r}\,
\left(\frac{1}{\gamma\sigma}\right)^{\!r}
\frac{K_{1+\frac{r}{2}}(\delta\gamma)}
{K_{1}(\delta\gamma)}.
\]
For \(r>-2\),
\[
\mathbb{E}[|X|^r]
=
e^{\beta\mu/\sigma^2}
\sum_{n=0}^{\infty}
\frac{(-r/2)_n}{(1/2)_n\,n!}
\left(-\frac{\beta^2\delta}{2\gamma}\right)^{\!n}
\frac{K_{1+\frac{r}{2}+n}(\delta\gamma)}
{K_{1}(\delta\gamma)}
\,L_n^{\left(-\frac12\right)}\!\left(-\frac{\mu\beta}{\sigma^2}\right),
\]
where \(L_n^{(\alpha)}(\cdot)\) denotes the generalized Laguerre polynomial.
Equivalently,
\[
\mathbb{E}[|X|^r]
=
\sum_{n=0}^{\infty}
\frac{(-r/2)_n}{(1/2)_n\,n!}
\left(-\frac{\beta^2\delta}{2\gamma}\right)^{\!n}
\frac{K_{1+\frac{r}{2}+n}(\delta\gamma)}
{K_{1}(\delta\gamma)}
\,{}_1F_1\!\left(-n;\tfrac12;\frac{\mu\beta}{\sigma^2}\right).
\]

 \subsection{Absolute moments of the symmetric GH distribution}

Setting \(\beta=0\) in the GH model yields the symmetric GH law
\(X=\mu+\sigma\sqrt{W}Z\), where \(W\sim\mathrm{GIG}(\lambda,\delta^2,\alpha^2)\).
The absolute central moments are given by
\[
\mathbb{E}\bigl[|X-\mu|^r\bigr]
=
\frac{2^{r/2}\Gamma\!\left(\frac{r+1}{2}\right)}{\sqrt{\pi}}\,
\sigma^{r}\,
\left(\frac{\delta}{\alpha}\right)^{\!r}
\frac{K_{\lambda+\frac{r}{2}}(\delta\alpha)}
{K_{\lambda}(\delta\alpha)},
\qquad r>-2\lambda,
\]
 the absolute raw moments are given by
\[
\mathbb{E}[|X|^r]
=
\frac{2^{r/2}\Gamma\!\left(\frac{r+1}{2}\right)}{\sqrt{\pi}}\,
\sigma^{r}
\sum_{n=0}^{\infty}
\frac{(-r/2)_n}{(1/2)_n\,n!}
\left(-\frac{\mu^2}{2\sigma^2}\right)^{\!n}
\left(\frac{\delta}{\alpha}\right)^{\!r-2n}
\frac{K_{\lambda+\frac{r}{2}-n}(\delta\alpha)}
{K_{\lambda}(\delta\alpha)},  \qquad r>-2\lambda.
\]
In the fully symmetric case $\mu=0$, this reduces to
\[
\mathbb{E}[|X|^r]
=
\frac{2^{r/2}\Gamma\!\left(\frac{r+1}{2}\right)}{\sqrt{\pi}}\,
(\sigma\delta/\alpha)^{r}
\frac{K_{\lambda+\frac{r}{2}}(\delta\alpha)}
{K_{\lambda}(\delta\alpha)}.
\]

\subsection{Absolute moments of the VG distribution}

For \(\lambda>0\), letting \(\delta\to0\) reduces the GH distribution to
the variance-gamma (VG) distribution (2.8).

For any real number $r > 0$, the $r$-th absolute central moment is given by the closed-form expression:
\begin{equation}
\begin{aligned}
\mathbb{E}\left[|X-\mu|^r\right]
= \;&
\frac{2^{\,r+\lambda+\frac{1}{2}}\,\Gamma(r+1)\,\Gamma(r+2\lambda)}
{\Gamma(r+\lambda+1)}
\, \gamma^{-2r-2\lambda} \, \sigma^{-2\lambda} \\
&\times \left[
\left( \sqrt{\gamma^2\sigma^2 + \beta^2} + \beta \right)^{2\lambda-1}
+
\left( \sqrt{\gamma^2\sigma^2 + \beta^2} - \beta \right)^{2\lambda-1}
\right],
\end{aligned}
\end{equation}
where $\Gamma(\cdot)$ denotes the Gamma function.

If the distribution is symmetric, i.e., $\beta = 0$, the formula (4.1) simplifies to
\begin{equation*}
\label{eq:symmetric}
\mathbb{E}\left[|X-\mu|^r\right]
=
\frac{2^{\,r+\lambda+\frac{3}{2}}\,\Gamma(r+1)\,\Gamma(r+2\lambda)}
{\Gamma(r+\lambda+1)}
\, \gamma^{-2r-1} \, \sigma^{-1}.
\end{equation*}

 The absolute raw moments are given by
 \begin{equation}
\mathbb{E}[|X|^r]
=
e^{\beta\mu/\sigma^2}
\sum_{n=0}^{\infty}
\frac{(-r/2)_n}{(1/2)_n\,n!}
\left(-\frac{\beta^2}{\alpha^2}\right)^{\!n}
\frac{\Gamma\!\left(\lambda+\frac{r}{2}+n\right)}
{\Gamma(\lambda)}\,
\left(\frac{2}{\alpha^2}\right)^{\!r/2+n}
\,L_n^{\left(-\frac12\right)}\!\left(-\frac{\mu\beta}{\sigma^2}\right),
 \end{equation}
where \(L_n^{(\alpha)}(\cdot)\) denotes the generalized Laguerre polynomial.
Equivalently,
 \begin{equation*}
\mathbb{E}[|X|^r]
=
\sum_{n=0}^{\infty}
\frac{(-r/2)_n}{(1/2)_n\,n!}
\left(-\frac{\beta^2}{\alpha^2}\right)^{\!n}
\frac{\Gamma\!\left(\lambda+\frac{r}{2}+n\right)}
{\Gamma(\lambda)}\,
\left(\frac{2}{\alpha^2}\right)^{\!r/2+n}
\,{}_1F_1\!\left(-n;\tfrac12;\frac{\mu\beta}{\sigma^2}\right).
 \end{equation*}
Gaunt (2025) obtained the result  for $r=2k$ with $k\in\mathbb{Z}^+$.

In the special case where the location parameter is zero (\(\mu = 0\)),  it simplifies to a closed form involving the Gauss hypergeometric function \({}_2F_1\) (see, Gaunt (2023)):
\begin{equation*}
\mathbb{E}|X|^r =
\frac{2^{r/2} \Gamma\!\left(\frac{r+1}{2}\right) \Gamma\!\left(\lambda + \frac{r}{2}\right)}
{\sqrt{\pi} \, \Gamma(\lambda)}
\, \sigma^r \, \gamma^{-r} \,
{}_2F_1\!\left(
-\frac{r}{2},\; \lambda + \frac{r}{2};\; \frac12;\;
-\frac{\beta^2}{2\sigma^2 \gamma^2}
\right)
\end{equation*}
for \(r > 0\).

 \subsection{Absolute moments of the Skewed $t$ Distribution}

Let \(X\) follow the skewed $t$ distribution with density,
corresponding to the GH limit \(\lambda=-\nu/2\), \(\gamma\to0\).
Let \(W\) follow an inverse-gamma distribution with shape \(\nu/2\) and scale
\(\delta^2\beta^2/\sigma^2\), and write \(X=\mu+\beta W+\sigma\sqrt{W}Z\). For $0<r<\nu$,
the absolute central moments are given by
\[
\mathbb{E}\bigl[|X-\mu|^r\bigr]
=
\frac{2^{r/2}\Gamma\!\left(\frac{r+1}{2}\right)}{\sqrt{\pi}}\,
\sigma^{r}\,
\frac{\Gamma\!\left(\frac{\nu-r}{2}\right)}
{\Gamma\!\left(\frac{\nu}{2}\right)}\,
\left(\frac{2|\beta|}{\delta^2}\right)^{\!r/2}
\,{}_1F_1\!\left(-\frac{r}{2};\frac{\nu-r}{2};-\frac{\beta^2}{\delta^2|\beta|}\right),
\]
 and the absolute raw moments are given by
\[
\mathbb{E}[|X|^r]
=
e^{\beta\mu/\sigma^2}
\sum_{n=0}^{\infty}
\frac{(-r/2)_n}{(1/2)_n\,n!}
\left(-\frac{\beta^2}{\delta^2|\beta|}\right)^{\!n}
\frac{\Gamma\!\left(\frac{\nu-r}{2}-n\right)}
{\Gamma\!\left(\frac{\nu}{2}\right)}\,
\left(\frac{2|\beta|}{\delta^2}\right)^{\!r/2+n}
\,L_n^{\left(-\frac12\right)}\!\left(-\frac{\mu\beta}{\sigma^2}\right),
\]
where \(L_n^{(\alpha)}(\cdot)\) denotes the generalized Laguerre polynomial.
Equivalently,
\[
\mathbb{E}[|X|^r]
=
\sum_{n=0}^{\infty}
\frac{(-r/2)_n}{(1/2)_n\,n!}
\left(-\frac{\beta^2}{\delta^2|\beta|}\right)^{\!n}
\frac{\Gamma\!\left(\frac{\nu-r}{2}-n\right)}
{\Gamma\!\left(\frac{\nu}{2}\right)}\,
\left(\frac{2|\beta|}{\delta^2}\right)^{\!r/2+n}
\,{}_1F_1\!\left(-n;\tfrac12;\frac{\mu\beta}{\sigma^2}\right).
\]
If the location parameter \(\mu = 0\), then \({}_1F_1(-n; \tfrac12; 0) = 1\), and the series collapses to a single non-zero term. The moment is given by the classical formula:
\begin{equation*}
\mathbb{E}[|X|^r]
=
\sigma^r
\frac{2^{r/2} \,\Gamma\!\left(\frac{r+1}{2}\right) \Gamma\!\left(\frac{\nu-r}{2}\right)}
{\sqrt{\pi} \;\Gamma\!\left(\frac{\nu}{2}\right)}
\left(\frac{\nu}{2}\right)^{-r/2}.
\end{equation*}

 \section{Conclusion}

We have demonstrated that the variance-mean mixture representation provides a powerful and conceptually straightforward framework for deriving  absolute moments of the GH distribution. This approach reduces what would otherwise require complex integrals of Bessel functions
to simple Gaussian moment calculations and the well-known moments of the GIG distribution. The
derivation for the VG distribution follows effortlessly via a limiting argument.
Our presentation significantly simplifies the existing literature, making these important formulas
transparent, easily verifiable, and straightforward to implement in practice. It would be interesting to investigate  absolute moments (e.g., moments of norms) for multivariate GH distribution, with direct applications in portfolio risk management. We leave this as an interesting project for future
studies.

\noindent{\bf Acknowledgements}

 This research was supported by the National Natural Science Foundation of China (No. 12071251).

{\bf Data availability}

No data was used for the research described in the article.\\

{\bf Appendix}.  Non-trivial zeros of the Riemann zeta function.  The Riemann zeta function is defined by
\begin{equation*}
\zeta(s)=\frac{1}{(1-2^{1-s})\Gamma(s+1)}\int_{0}^{\infty} \frac{t^{s}\,e^{t}}{(e^{t}+1)^{2}}\,dt,
\qquad 0<\Re(s)<1.
\end{equation*}
 For $s=a+bi$, $0<a<1, b\in\mathbb{R}$, let
$$I(a,b):=I(s)=\int_{0}^{\infty} \frac{t^{s}\,e^{t}}{(e^{t}+1)^{2}}\,dt,$$
which can be written as
\begin{eqnarray*}
I(a,b)&=&\int_{0}^{\infty} \frac{t^{a}\,e^{t}}{(e^{t}+1)^{2}}(\cos(b\ln t)+i\sin(b\ln t))\,dt\\
&=&\int_{0}^{\infty} \frac{t^{a}\,e^{t}}{(e^{t}+1)^{2}}\cos(b\ln t)\,dt\\
&&+i\int_{0}^{\infty} \frac{t^{a}\,e^{t}}{(e^{t}+1)^{2}}\sin(b\ln t)\,dt\\
&&\equiv J_1(a,b)+iJ_2(a,b).
\end{eqnarray*}
 It can be proved that $I(s) = 0$ if and only if $\zeta(s) = 0$, which is equivalent to $\zeta(1-s) = 0$ by the Riemann functional equation
$$\zeta(s) = 2^s \pi^{s-1} \sin\left(\frac{\pi s}{2}\right) \Gamma(1-s) \zeta(1-s),$$
  and in turn to $I(1-s) = 0$.
It follows that $\zeta(s)=0$ if and only if the following four equations hold
simultaneously:
\[
\begin{cases}
J_1(a,b)=0, \\
J_2(a,b) = 0, \\
J_1(1-a,b)=0,\\
J_2(1-a,b)=0.
\end{cases}
\]
 For sufficiently large $|b|$, it can be proved that
\[
\begin{cases}
J_1(a,b) < J_1(1-a,b), & 0<a<\frac{1}{2},\\[4pt]
J_1(a,b) = J_1(1-a,b), & a=\frac{1}{2},\\[4pt]
J_1(a,b) > J_1(1-a,b), & \frac{1}{2}<a<1.
\end{cases}
\]
and
\[
\begin{cases}
J_2(a,b)>J_2(1-a,b), & 0<a<\dfrac12,\\[2mm]
J_2(a,b)=J_2(1-a,b), & a=\dfrac12,\\[2mm]
J_2(a,b)<J_2(1-a,b), & \dfrac12<a<1.
\end{cases}
\]
Observe that $\zeta(s) = 0$ requires $J_1(a,b) = J_1(1-a,b)$ and $J_2(a,b) = J_2(1-a,b)$ whenever $0 < a < 1$.  We conclude that every non-trivial zero of $\zeta(s)$ has real part exactly $\frac{1}{2}$.

\end{document}